\documentclass[11pt,twoside]{article}
\usepackage{amsmath, amsthm, amssymb}
\usepackage{amsmath}
\usepackage{amsfonts}
\usepackage{dsfont}
\usepackage{epsfig}
\usepackage{algorithm}
\usepackage{algorithmic}
\usepackage{graphicx}
\usepackage{slashbox}
\usepackage{tikz}
\usepackage{amsmath, amsthm, amssymb}
%%ADD DEINITIONS HERE IF ANY%%
\newtheorem{definition}{Definition}
\newtheorem{theorem}{Theorem}
\newtheorem{lemma}{Lemma}
\newtheorem{remark}{Remark}

\makeatletter \oddsidemargin0.45in \evensidemargin \oddsidemargin
\marginparwidth1.9375in \makeatother
\newcommand{\be}{\begin{equation}}
\newcommand{\ee}{\end{equation}}
\newcommand{\ba}{\begin{array}}
\newcommand{\ea}{\end{array}}
\newcommand{\bea}{\begin{eqnarray}}
\newcommand{\eea}{\end{eqnarray}}
\newcommand{\beas}{\begin{eqnarray*}}
\newcommand{\eeas}{\end{eqnarray*}}
\textwidth5.8in \topmargin.0in \textheight8.3in
\pagestyle{myheadings}

%\documentclass[11pt,twoside]{article}
%\usepackage{amsmath, amsthm, amssymb}
%\usepackage{amsmath}%
%\usepackage{amsfonts}
%\usepackage{dsfont}%
%\usepackage{epsfig}%
%\usepackage{algorithm}
%\usepackage{algorithmic}%
%\usepackage{graphicx}
%\usepackage{slashbox}%
%\usepackage{tikz}%

%\everymath{\displaystyle}%

%%ADD DEINITIONS HERE IF ANY%%
%\newtheorem{definition}{Definition}
%\newtheorem{theorem}{Theorem}%
%\newtheorem{lemma}{Lemma}%
%\newtheorem{remark}{Remark}%
%\newtheorem{example}{Example}%
%\newtheorem{corollary}{Corollary}%
%\newtheorem{proposition}{Proposition}%
%\makeatletter \oddsidemargin0.45in \evensidemargin \oddsidemargin%
%\marginparwidth1.9375in \makeatother%

%\textwidth5.8in \topmargin.0in \textheight8.3in%
%\pagestyle{myheadings}%

%%Author's initials should precede their names,e.g. W.J. Jones%%
%%Upper and Lower Case Should Be Used for Short Title of Paper%%

\begin{document}

\title{Analysis of some  generalized $ABC-$ fractional logistic models}

\author{ Thabet Abdeljawad$^{a,d}$, Mohamed A. Hajji$^{b}$, \\Qasem M. Al-Mdallal$^{b}$ , Fahd Jarad $^{c}$,\\
$^{a}$Department of Mathematics and General Sciences,\\
Prince Sultan University\\ P. O. Box 66833,  11586 Riyadh, Saudi Arabia\\ Email:tabdeljawad@psu.edu.sa\\
 $^{b}$Department of Mathematical Sciences,
United Arab Emirates University\\
P.O. Box 15551, Al Ain, Abu Dhabi, UAE\\
Email: mahajji@uaeu.ac.ae; q.almdallal@uaeu.ac.ae\\
$^{c}$Department of Mahematics, \c{C}ankaya University, 06790 Ankara, Turkey\\
 Email: fahd@cankaya.edu.tr\\
$^d$  Department of Medical Research, China Medical University, 40402,\\ Taichung, Taiwan  }
\maketitle

\begin{abstract}
  In this article,  some logistic models in the settings of Caputo fractional operators with multi-parametered Mittag-Leffler kernels (ABC) are studied. This study mainly focuses on  modified quadratic and cubic logistic models in the presence of a Caputo type fractional derivative. Existence and uniqueness theorems are proved and stability analysis is discussed  by perturbing the equilibrium points. Numerical illustrative  examples are discussed for the studied models.\\
\\
\textbf{Keywords:} Generalized Mittag-Leffler kernel, generalized $ABC-$ fractional derivatives, generalized $ABC-$ fractional integrals , $ABC-$ logistic equations, modified $ABC-$ logistic model.
\end{abstract}

\section{Introduction}

One of the most recently popular branch  of mathematics is the fractional calculus that is concerned with derivatives and integrals of real or complex orders.  As a matter of fact, this type of calculus, although as old as the classic calculus, it has attracted the attention of researchers working on different disciplines because of the astounding results  obtained when some of these researchers exploited fractional operators in modeling some real world problems \cite{podlubny, Samko, Kilbas, Magin, Machado,Hilfer, Lorenzo,hajji2014efficient,al2010numerical}.  Despite this, these researchers have not stopped searching for more fractional operators, not only to enrich this calculus by discovering new kinds of fractional operators, but to understand better the complex systems they face in modeling. It can be realized that starting from the turn of this century researchers have proposed   a variety of fractional operators  \cite{had,fahd1,fahd2,Kat1,Kat2,fahd3,FTED 2017 ADIE} and added variety of fractional operators with different approaches to the field of fractional calculus. 

Till 2014, all the known fractional operators encapsuled singular kernels. In 2015, Caputo et al.  \cite{FCaputo} proposed a fractional derivative without singular kernel. This can be considered as a revolution in the theory of fractional calculus. This was the first step of the birth of  the most popular fractional derivative, Atangana-Baleanu fractional derivative \cite{Abdon}. After this derivative had been proposed, most of the works treated by the traditional fractional derivative were reconsidered by the new approach \cite{fahd, TD JNSA}. Afterwords,  Abdeljawad in \cite{T2,ThChaosAIP}, proposed a new nonsingular fractional derivative in Atanagana-Baleanu settings containing a multi-parametered Mittag-Leffler function and its discrete version in \cite{T1}. The entity of many parameters  redounded new properties to this derivative and enabled to overcome some obstacles.

Many researchers have discussed the logistic equation in the framework of differenet fractional derivatives (see \cite{El Sayed, Syed Abbas, Area-2016,TQF1}. In this work, we consider  two models of logistic equations in the frame of the recently proposed fractional derivative in \cite{T2,ThChaosAIP}.  We consider primarily  the  fractional  quadratic and cubic logistic models presented, respectively as 
\begin{equation}\label{mm1}
   ( ~_{t_0}^{ABC}D^{\theta,\mu,\gamma}x)(t)=r x(t) (1-x(t)),~~t>t_0,~x(t_0)=x_0,
  \end{equation}
  and
  \begin{equation}\label{mm2}
   ( ~_{t_0}^{ABC}D^{\theta,\mu,\gamma}x)(t)=r x(t) (1-\frac{x(t)}{k})(x(t)-m),~~t>t_0,~x(t_0)=x_0,
  \end{equation}
   $\theta \in (0,1]$, $\mu, \gamma >0$, and $r,m,k>0$. Here, $~_{t_0}^{ABC}D^{\theta,\mu,\gamma}$ represents the generalized  left $ABC-$  fractional derivative  introduced in \cite{T2} and  studied in \cite{ThChaosAIP}.

This article is organized as follows: In  section 2,  some essential concepts are presented.  The existence and uniqueness of the equations under consideration are discussed in section 3. In section 4, the stability of the considered logistic models  are discussed. In section 5,  numerical discussion is presented. The last section devoted to the conclusion.

\section{Preliminary results and essential concepts}
Motivated by the time scale notation we shall use the following modified versions of generalized Mittag-Leffler functions as was used previously, for example,  in \cite{T2,ThChaosAIP,T1}.

\begin{definition} \label{DML}  For $\lambda \in \mathbb{R},$ and $\theta, \beta,\rho, z \in \mathbb{C}$ with $Re(\theta)>0$, the generalized Mittag-Leffler functions are defined by
\begin{equation}
E_{\theta, \beta}^\rho(\lambda,z)= \sum_{k=0}^\infty \lambda^k
\frac{z^{k\theta+\beta-1}(\rho)_k} {\Gamma(\theta
k+\beta)k!}.
\end{equation}
For $\beta=\rho=1$, it is written that
\begin{equation} \label{nM22}
E_{\overline{\theta}} (\lambda, z)\triangleq E_{\theta, 1}^1(\lambda, z)=  \sum_{k=0}^\infty \lambda^k
\frac{z^{k\theta}} {\Gamma(\theta
k+1)},
\end{equation}
where $(\rho)_k=\rho (\rho+1)...(\rho+k-1)$. Notice that since $(1)_k=k!$,  then we write $ E^1_{\theta,\beta} ( \lambda,z)=E_{\theta,\beta} ( \lambda, z)$.
\end{definition}
\begin{definition}\cite{T2,ThChaosAIP} The  generalized left $ABC$ and $ABR$ fractional derivative starting at $t_0$ and with kernel $E^{\gamma}_{\theta, \mu}(\lambda,t)$, where $0<\theta<1,~~Re(\mu)>0,~\gamma \in \mathbb{R}$ and where $\lambda=  \frac{-\theta}{1-\theta}$,  are defined repspectively by
\begin{equation}\label{gABC}
(~^{ABC}_ {t_0}D^{\theta,\mu,\gamma} f)(t)=\frac{B(\theta)}{1-\theta}\int_{t_0}^t E^{\gamma}_{\theta, \mu}(\lambda,t-s)  f^\prime(s)ds,~~t \geq t_0.
\end{equation}
and
\begin{equation}\label{gABR}
(~^{ABR}_ {t_0}D^{\theta,\mu,\gamma} f)(t)=\frac{B(\theta)}{1-\theta}\frac{d}{dt}\int_{t_0}^t E^{\gamma}_{\theta, \mu}(\lambda,t-s)  f(s)ds,~~t \geq t_0.
\end{equation}

\end{definition}

\begin{definition} \cite{ThChaosAIP} \label{defn8}Assume $\eta(t)$ is defined on $[t_0,T]$. Then,
 the left generalized $AB$ fractional integral of order $0<\theta\leq1,\mu>0, \gamma>0$ is given by

 \begin{equation}\label{gammal}
 (~^{AB}_{a}I^{\theta,\mu,\gamma} \eta)(t)=\sum_{i=0}^\infty \binom{\gamma}{i} \frac{\theta^i}{B(\theta) (1-\theta)^{i-1}} (~_{a}I^{\theta i+1-\mu}\eta)(t).
\end{equation}
\end{definition}
For a continuous function $\eta(t)$  at $t_0$  whose ABR-derivative exists we know from \cite{T2} that

\begin{equation}\label{weknow}
  ( ~^{AB}_{a}I^{\theta,\mu,\gamma} ~^{ABR}_{a}D^{\theta,\mu,\gamma}\eta)(t)=\eta(t),~\texttt{and}~ (  ~^{ABR}_{a}D^{\theta,\mu,\gamma}~^{AB}_{a}I^{\theta,\mu,\gamma}\eta)(t)=\eta(t).
\end{equation}
The following lemma is (35) of  Theorem 3 in \cite{ThChaosAIP}. It shows the action of the generalized AB-integral operator on the generalized ABC-operator.
\begin{lemma} \label{lem1}
For $0< \theta< 1, ~~\mu >0,~~\gamma \in \mathbb{C}$, and $\lambda =\frac{-\theta}{1-\theta}$, we have
\begin{equation}\label{401}
   ( ~^{AB}_{a}I^{\theta,\mu,\gamma} ~^{ABC}_{a}D^{\theta,\mu,\gamma}\eta)(t)=\eta(t)-\eta(a).
\end{equation}

\end{lemma}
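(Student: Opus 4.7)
The plan is to reduce the identity to the already-established inverse relation (\ref{weknow}) between $^{AB}I^{\theta,\mu,\gamma}$ and $^{ABR}D^{\theta,\mu,\gamma}$, by showing that the ABC-derivative of $\eta$ coincides with the ABR-derivative of the shifted function $\bar\eta(t):=\eta(t)-\eta(a)$. Once that is done, the AB-integral immediately collapses the left-hand side to $\bar\eta(t)$.

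The first step is to integrate by parts in the definition (\ref{gABC}) of $^{ABC}_{a}D^{\theta,\mu,\gamma}\eta$, taking $u(s)=E^{\gamma}_{\theta,\mu}(\lambda,t-s)$ and $dv=\eta'(s)\,ds$. This produces the boundary term $E^{\gamma}_{\theta,\mu}(\lambda,0)\eta(t)-E^{\gamma}_{\theta,\mu}(\lambda,t-a)\eta(a)$ plus the integral $\int_a^t \partial_s\!\left[-E^{\gamma}_{\theta,\mu}(\lambda,t-s)\right]\eta(s)\,ds$. On the other hand, differentiating under the integral in (\ref{gABR}) via Leibniz's rule gives $E^{\gamma}_{\theta,\mu}(\lambda,0)\eta(t)+\int_a^t \partial_t E^{\gamma}_{\theta,\mu}(\lambda,t-s)\,\eta(s)\,ds$, and since $\partial_t E^{\gamma}_{\theta,\mu}(\lambda,t-s)=-\partial_s E^{\gamma}_{\theta,\mu}(\lambda,t-s)$, the two integrals and the $t$-boundary contributions match. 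Subtracting therefore yields
$$
(^{ABC}_{a}D^{\theta,\mu,\gamma}\eta)(t)=(^{ABR}_{a}D^{\theta,\mu,\gamma}\eta)(t)-\frac{B(\theta)}{1-\theta}E^{\gamma}_{\theta,\mu}(\lambda,t-a)\,\eta(a).
$$

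The second step is to recognize the correction term as the ABR-derivative of the constant function $\eta(a)$. A short substitution $u=t-s$ in (\ref{gABR}) applied to a constant $c$ gives $\int_a^t E^{\gamma}_{\theta,\mu}(\lambda,t-s)\,ds=\int_0^{t-a}E^{\gamma}_{\theta,\mu}(\lambda,u)\,du$, whose $t$-derivative is precisely $E^{\gamma}_{\theta,\mu}(\lambda,t-a)$, so $(^{ABR}_{a}D^{\theta,\mu,\gamma}\,\eta(a))(t)=\frac{B(\theta)}{1-\theta}E^{\gamma}_{\theta,\mu}(\lambda,t-a)\,\eta(a)$. Linearity of $^{ABR}D^{\theta,\mu,\gamma}$ then turns the previous display into the clean identity $(^{ABC}_{a}D^{\theta,\mu,\gamma}\eta)(t)=(^{ABR}_{a}D^{\theta,\mu,\gamma}\bar\eta)(t)$.

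The final step is to apply $^{AB}_{a}I^{\theta,\mu,\gamma}$ to both sides and use (\ref{weknow}):
$$
(^{AB}_{a}I^{\theta,\mu,\gamma}\,^{ABC}_{a}D^{\theta,\mu,\gamma}\eta)(t)=(^{AB}_{a}I^{\theta,\mu,\gamma}\,^{ABR}_{a}D^{\theta,\mu,\gamma}\bar\eta)(t)=\bar\eta(t)=\eta(t)-\eta(a),
$$
which is the claim. The main technical obstacle will be the integration-by-parts step, where the kernel $E^{\gamma}_{\theta,\mu}(\lambda,t-s)$ carries the factor $(t-s)^{\mu-1}$ in its leading term and may be singular at $s=t$ when $\mu<1$; justifying IBP then requires treating the integral as improper or, equivalently, expanding $E^{\gamma}_{\theta,\mu}$ as the series in Definition \ref{DML} and interchanging summation with IBP term by term, which is legitimate under mild regularity of $\eta$ on $[a,T]$.
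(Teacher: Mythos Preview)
Your argument is correct and self-contained. Note, however, that the paper does not actually prove Lemma~\ref{lem1}: it is simply quoted as equation (35) of Theorem~3 in \cite{ThChaosAIP}. So there is no in-paper proof to compare against, and your proposal in effect supplies what the paper omits.

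Two remarks on streamlining. First, the identity you obtain in your first step,
\[
(^{ABC}_{a}D^{\theta,\mu,\gamma}\eta)(t)=(^{ABR}_{a}D^{\theta,\mu,\gamma}\eta)(t)-\frac{B(\theta)}{1-\theta}\,E^{\gamma}_{\theta,\mu}(\lambda,t-a)\,\eta(a),
\]
is exactly Lemma~\ref{lem2} of the paper (also cited from \cite{T2,ThChaosAIP}). You may therefore invoke it directly and skip the integration-by-parts/Leibniz computation altogether; this also sidesteps the $s=t$ singularity for $\mu<1$ that you rightly flag at the end, since that delicate point is already absorbed into the cited result. Second, your key observation---that the correction term equals $(^{ABR}_{a}D^{\theta,\mu,\gamma}\,\eta(a))(t)$ via the substitution $u=t-s$---is exactly what makes the proof work: by linearity it gives $(^{ABC}_{a}D^{\theta,\mu,\gamma}\eta)(t)=(^{ABR}_{a}D^{\theta,\mu,\gamma}\bar\eta)(t)$ with $\bar\eta=\eta-\eta(a)$, and then (\ref{weknow}) finishes in one line. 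This is the natural route and is almost certainly the argument behind the cited Theorem~3 in \cite{ThChaosAIP}.
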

\begin{lemma}\label{lem2} \cite{T2,ThChaosAIP}
For any $0<\theta, ~\mu>0, ~~\gamma \in \mathbb{R}$, and $\chi$ defined for  $t\geq  t_0$, we have

 \begin{equation}
    (~^{ABC}_{t_0}D^{\theta,\mu,\gamma} \chi)(t)=(~^{ABR}_{t_0}D^{\theta,\mu,\gamma}\chi)(t)-\frac{B(\theta)}{1-\theta} \chi(a)E^\gamma_{\theta,\mu}(\lambda, t-a).
   \end{equation}
 Above $\lambda =\frac{-\theta}{1-\theta}$.
 \end{lemma}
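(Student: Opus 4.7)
The plan is to derive the identity by integration by parts applied directly to the convolution defining the $ABC$-derivative, and then to recognize what remains as the $ABR$-derivative plus a boundary correction at $s=t_0$. Throughout I write $K(r)\triangleq E^{\gamma}_{\theta,\mu}(\lambda,r)$ for brevity, so that
\begin{equation*}
(~^{ABC}_{t_0}D^{\theta,\mu,\gamma}\chi)(t)=\frac{B(\theta)}{1-\theta}\int_{t_0}^{t}K(t-s)\,\chi'(s)\,ds,
\qquad
(~^{ABR}_{t_0}D^{\theta,\mu,\gamma}\chi)(t)=\frac{B(\theta)}{1-\theta}\frac{d}{dt}\int_{t_0}^{t}K(t-s)\,\chi(s)\,ds.
\end{equation*}

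First I would apply integration by parts to the $ABC$ integrand with $u=K(t-s)$ and $dv=\chi'(s)\,ds$, using $\frac{d}{ds}K(t-s)=-K'(t-s)$, to obtain
\begin{equation*}
\int_{t_0}^{t}K(t-s)\chi'(s)\,ds
=\bigl[K(t-s)\chi(s)\bigr]_{s=t_0}^{s=t}+\int_{t_0}^{t}K'(t-s)\chi(s)\,ds
=K(0)\chi(t)-K(t-t_0)\chi(t_0)+\int_{t_0}^{t}K'(t-s)\chi(s)\,ds.
\end{equation*}
Next I would apply the Leibniz rule to the $ABR$ expression to get
\begin{equation*}
\frac{d}{dt}\int_{t_0}^{t}K(t-s)\chi(s)\,ds=K(0)\chi(t)+\int_{t_0}^{t}K'(t-s)\chi(s)\,ds.
\end{equation*}
Subtracting, the interior integrals and the terms $K(0)\chi(t)$ cancel, and multiplying by $B(\theta)/(1-\theta)$ leaves exactly
\begin{equation*}
(~^{ABC}_{t_0}D^{\theta,\mu,\gamma}\chi)(t)=(~^{ABR}_{t_0}D^{\theta,\mu,\gamma}\chi)(t)-\frac{B(\theta)}{1-\theta}\,E^{\gamma}_{\theta,\mu}(\lambda,t-t_0)\,\chi(t_0),
\end{equation*}
which is the asserted identity (with the $a$ in the statement understood as $t_0$).

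The main obstacle is that $K(0)$ is only well-defined when $\mu\ge 1$: for $0<\mu<1$ the leading term of the series, $r^{\mu-1}/\Gamma(\mu)$, blows up as $r\to 0^{+}$, so neither boundary term $K(0)\chi(t)$ nor $\lim_{s\to t^{-}}K(t-s)\chi(s)$ in the Leibniz rule is meaningful in isolation. To make the cancellation rigorous I would expand $K(t-s)$ as its defining Mittag-Leffler series and, justifying termwise interchange of the sum with the integral/derivative by the absolute convergence on compact subintervals, reduce the claim to the known Riemann-Liouville identity
\begin{equation*}
\bigl(~_{t_0}I^{k\theta+\mu}\chi'\bigr)(t)=\bigl(~_{t_0}I^{k\theta+\mu-1}\chi\bigr)(t)-\frac{(t-t_0)^{k\theta+\mu-1}}{\Gamma(k\theta+\mu)}\chi(t_0)
\end{equation*}
for each $k\ge 0$. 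Re-summing with the weights $\lambda^{k}(\gamma)_{k}/k!$ reconstructs $(~^{ABR}_{t_0}D^{\theta,\mu,\gamma}\chi)(t)$ on one side and $\frac{B(\theta)}{1-\theta}E^{\gamma}_{\theta,\mu}(\lambda,t-t_0)\chi(t_0)$ on the other, delivering the identity in full generality under the standing hypothesis that both $(~^{ABC}_{t_0}D^{\theta,\mu,\gamma}\chi)(t)$ and $(~^{ABR}_{t_0}D^{\theta,\mu,\gamma}\chi)(t)$ exist.
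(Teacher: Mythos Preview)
The paper does not actually prove this lemma: it is quoted as a preliminary result from the cited references \cite{T2,ThChaosAIP}, so there is no in-paper proof to compare against. Your argument is correct and is exactly the standard device used in those references: the discrepancy between a Caputo-type and a Riemann--Liouville-type operator with the same convolution kernel is always the boundary contribution at $s=t_0$, obtained either by direct integration by parts when the kernel is bounded at $0$ (i.e.\ $\mu\ge 1$) or, as you correctly note for $0<\mu<1$, by applying termwise the classical identity $({}_{t_0}I^{\alpha}\chi')(t)=\tfrac{d}{dt}({}_{t_0}I^{\alpha}\chi)(t)-\tfrac{(t-t_0)^{\alpha-1}}{\Gamma(\alpha)}\chi(t_0)$ with $\alpha=k\theta+\mu$ and re-summing.

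One small point worth tightening: in your displayed termwise identity you wrote $({}_{t_0}I^{k\theta+\mu-1}\chi)(t)$ on the right, but what actually re-sums to the $ABR$ operator (which carries an outer $d/dt$) is $\tfrac{d}{dt}({}_{t_0}I^{k\theta+\mu}\chi)(t)$. These agree only under the convention that a fractional integral of negative order is the corresponding Riemann--Liouville derivative; since for $k=0$ and $0<\mu<1$ the exponent $k\theta+\mu-1$ is negative, you should state this convention explicitly or simply keep the $\tfrac{d}{dt}$ outside.
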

The following lemma, which is Remark 8 in \cite{T1} is essential  to proceed, .
\begin{lemma} \label{tool1}
The continuous system
 \begin{equation}\label{clinear}
 (~^{ABC} _{t_0}D^{\theta,\mu,\gamma} \alpha)(t)=\rho \alpha(t)+g(t),~~~\alpha(t_0)=a_0,~~~0<\theta \leq 1,~\mu,\gamma \in \mathbb{C},~t \geq t_0,
 \end{equation}
 has the explicit solution

 \begin{eqnarray} \label{above}
\nonumber
  \alpha(t) &=& \alpha(t_0)\sum_{j=0}^\infty \rho^j (\frac{1-\theta}{B(\theta)})^j E_{\theta,j(1-\mu)+1}^{-\gamma j}(\lambda, t-t_0)  \\ \nonumber
  &+& g(t)*\sum_{j=0}^\infty \rho^j (\frac{1-\theta}{B(\theta)})^{j+1} E_{\theta,(j+1)(1-\mu)}^{-\gamma (j+1)}(\lambda, t-t_0)\\ \nonumber
   &=& \alpha(t_0)+ \alpha(t_0)\sum_{j=1}^\infty \rho^j (\frac{1-\theta}{B(\theta)})^j E_{\theta,j(1-\mu)+1}^{-\gamma j}(\lambda, t-t_0)\\
   &+& g(t)*\sum_{j=0}^\infty \rho^j (\frac{1-\theta}{B(\theta)})^{j+1} E_{\theta,(j+1)(1-\mu)}^{-\gamma (j+1)}(\lambda, t-t_0).
\end{eqnarray}
\end{lemma}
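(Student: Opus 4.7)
My strategy is to convert the generalized ABC initial value problem into a Volterra integral equation and to solve it by Picard iteration, with the closed form emerging from a semigroup identity for Prabhakar--type Mittag-Leffler kernels.

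First, I would apply $~^{AB}_{t_0}I^{\theta,\mu,\gamma}$ to both sides of \eqref{clinear}. By Lemma~\ref{lem1}, the left-hand side collapses to $\alpha(t)-a_0$, so the equation becomes the fixed-point problem
\begin{equation*}
\alpha(t) = a_0 + \rho\,(~^{AB}_{t_0}I^{\theta,\mu,\gamma}\alpha)(t) + (~^{AB}_{t_0}I^{\theta,\mu,\gamma} g)(t).
\end{equation*}
Next, I would rewrite the AB integral as convolution against a modified Mittag-Leffler kernel. Substituting the RL-integral convolution form $(~_{t_0}I^{\sigma} f)(t) = \frac{1}{\Gamma(\sigma)}\int_{t_0}^t (t-s)^{\sigma-1}f(s)\,ds$ into Definition~\ref{defn8} and using $\binom{\gamma}{i}=(-1)^i(-\gamma)_i/i!$ with $\lambda=-\theta/(1-\theta)$ yields
\begin{equation*}
(~^{AB}_{t_0}I^{\theta,\mu,\gamma} f)(t) = \tfrac{1-\theta}{B(\theta)}\,\bigl(E_{\theta,\,1-\mu}^{-\gamma}(\lambda,\cdot) * f\bigr)(t).
\end{equation*}

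I would then iterate the fixed-point map starting from $\alpha_0(t)\equiv a_0$. Each application of the AB integral introduces a factor $\rho\,(1-\theta)/B(\theta)$ and one more convolution by the kernel $E_{\theta,\,1-\mu}^{-\gamma}(\lambda,\cdot)$. By induction on $j$, the $j$-th iterate contributes
\begin{equation*}
a_0\,\rho^{j}\Bigl(\tfrac{1-\theta}{B(\theta)}\Bigr)^{j} E_{\theta,\,j(1-\mu)+1}^{-\gamma j}(\lambda, t-t_0)
\end{equation*}
to the homogeneous part, with the analogous term convolved against $g(t)$ furnishing the inhomogeneous part. The inductive step reduces to the Prabhakar-style convolution identity
\begin{equation*}
E_{\theta,a}^{\alpha}(\lambda,\cdot) * E_{\theta,b}^{\beta}(\lambda,\cdot) = E_{\theta,\,a+b}^{\alpha+\beta}(\lambda,\cdot),
\end{equation*}
applied with $(a,\alpha)=(1-\mu,-\gamma)$ and $(b,\beta)=(j(1-\mu),-\gamma j)$, which is exactly the additive shift in both the lower parameter and the upper exponent demanded by \eqref{above}.

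The hard step will be establishing this Prabhakar convolution identity. Proving it amounts to multiplying the two defining power series, interchanging summations, performing the RL-type inner integral via Euler's beta identity $\int_{0}^{t}s^{a-1}(t-s)^{b-1}\,ds = B(a,b)\,t^{a+b-1}$, and collapsing the resulting Pochhammer convolution through Chu--Vandermonde, $(\alpha+\beta)_k = \sum_{m=0}^{k}\binom{k}{m}(\alpha)_m(\beta)_{k-m}$. Once this identity is in hand, termwise convergence of the Picard series on any compact interval $[t_0,T]$ follows from the entire growth of $E_{\theta,\beta}^{\rho}$ by a standard Weierstrass $M$-test, and the representation in \eqref{above} is obtained by assembling the iterates. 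Setting the constant $j=0$ term aside to recognize $\alpha(t_0)=a_0$ then yields the second form displayed in the lemma.
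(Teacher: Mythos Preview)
Your approach is correct and matches one of the two routes the paper attributes to this result. The paper itself does not prove Lemma~\ref{tool1}; it is quoted from \cite{T1}, and the subsequent Remark in the paper explains that the explicit solution was derived in \cite{ThChaosAIP,T1} by two methods: (i) successive approximations exploiting the semigroup property of the generalized $AB$ integral operators, and (ii) Laplace transforms. Your Picard iteration combined with the Prabhakar convolution identity $E_{\theta,a}^{\alpha}(\lambda,\cdot)*E_{\theta,b}^{\beta}(\lambda,\cdot)=E_{\theta,a+b}^{\alpha+\beta}(\lambda,\cdot)$ is precisely route~(i); the identity you isolate is exactly the ``semigroup property'' alluded to in the Remark, and your reduction of $~^{AB}_{t_0}I^{\theta,\mu,\gamma}$ to convolution against $\tfrac{1-\theta}{B(\theta)}E_{\theta,1-\mu}^{-\gamma}(\lambda,\cdot)$ is the key observation that makes the induction close. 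The Laplace transform alternative trades the Prabhakar convolution identity for the algebraic factorization of $\widehat{E_{\theta,\beta}^{\rho}}(s)=s^{-\beta}(1-\lambda s^{-\theta})^{-\rho}$, which turns the iteration into a geometric series in the transform domain; both approaches are standard and yield the same closed form.
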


\section{Existence and uniqueness theorems}
Consider the system
\begin{equation}\label{systemm}
  ~_{t_0}^{ABC}D^{\theta,\mu,\gamma}x(t)=f(t,x(t)),~~x(t_0)=x_0,~t \in (t_0,T],
\end{equation}
where $\alpha \in (0,1)$, $f:[t_0,b)\times G$, $G\subseteq \mathbb{R} (~\texttt{or}~\mathbb{C})$ is open and
$$h(t)=f(t,x(t)) \in C[t_0,b]=\{y:[t_0,b]\rightarrow \mathbb{R}: y(t) \texttt{is continuous}\}.$$

The space $C[t_0,b]$ is a Banach space when it is endowed by the supremum norm
\begin{equation}\label{norm1}
  \|y\|_\infty=\sup_{t \in [t_0,T]}| y(t)|,
\end{equation}

\begin{definition}\label{solution}
A function $x(t)$ is said to be a solution of the initial value problem (\ref{systemm}) if
\begin{enumerate}
  \item $(t,x(t)) \in D,~~D=[t_0,T]\times B,~~B=\{x \in \mathbb{R}:|x|\leq L\}\subset G,~~L>0$
  \item $x(t)$ satisfies (\ref{systemm}).
\end{enumerate}
\end{definition}
\begin{theorem}\label{existence uniqueness}
The generalized ABC-fractional initial value problem  (\ref{systemm}) has a unique solution in the space
$$
C^{\theta,\mu,\gamma}[t_0,T]=\{y(t)\in C[t_0,b]:~_{t_0}^{ABC}D^{\theta,\mu,\gamma} y(t) \in C[t_0,T]\},
$$
 with $0 \leq \theta  <1, \mu,\gamma>0$, provided that
\begin{equation}\label{existence condition1}
C_1(T,A)=\frac{A}{B(\theta)} \sum_{i=0}^\infty \frac{\binom{\gamma}{i} \theta^i (T-t_0)^{i \theta -\mu+1}}{(1-\theta)^{i-1}\Gamma(i \theta+2-\mu)}<1,~\texttt{if}~ \mu\neq1,
\end{equation}
and
\begin{equation}\label{existence condition11}
C_2(T,A)=\frac{A}{B(\theta)}\left[(1-\theta)+ \sum_{i=1}^\infty \frac{\binom{\gamma}{i} \theta^i (T-t_0)^{i \theta }}{(1-\theta)^{i-1}\Gamma(i \theta+1)}\right]<1,~\texttt{if}~ \mu=1,
\end{equation}

and that $f$ satisfies the Lipschitz condition
\begin{equation}\label{Lipsch}
  |f(t,y_1)-f(t,y_2)|\leq A |y_1-y_2|,~~~A>0.
\end{equation}
Moreover, the case $\mu=1$ requires that $f(t_0,x(t_0))=0$.
\end{theorem}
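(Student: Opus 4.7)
The plan is to convert the fractional IVP into an equivalent integral equation and then apply the Banach fixed point theorem on $C[t_0,T]$ equipped with the supremum norm. Applying the generalized AB-integral operator $~^{AB}_{t_0}I^{\theta,\mu,\gamma}$ to both sides of (\ref{systemm}) and invoking Lemma \ref{lem1} yields
\begin{equation*}
x(t) = x_0 + (~^{AB}_{t_0}I^{\theta,\mu,\gamma} f(\cdot, x(\cdot)))(t).
\end{equation*}
So I would define the operator $\mathcal{T}:C[t_0,T]\to C[t_0,T]$ by $(\mathcal{T}x)(t)=x_0 + (~^{AB}_{t_0}I^{\theta,\mu,\gamma} f(\cdot, x(\cdot)))(t)$, expand it by Definition \ref{defn8}, and show that fixed points of $\mathcal{T}$ are exactly the solutions in $C^{\theta,\mu,\gamma}[t_0,T]$.

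Next I would estimate $\|\mathcal{T}x_1-\mathcal{T}x_2\|_\infty$ by feeding the Lipschitz bound (\ref{Lipsch}) into the series representation of the AB-integral. Using the elementary fact $(~_{t_0}I^\alpha 1)(t) = (t-t_0)^\alpha/\Gamma(\alpha+1)$, one gets for each $i$ the estimate
\begin{equation*}
(~_{t_0}I^{i\theta+1-\mu}|f(\cdot,x_1)-f(\cdot,x_2)|)(t) \leq A\|x_1-x_2\|_\infty\,\frac{(t-t_0)^{i\theta+1-\mu}}{\Gamma(i\theta+2-\mu)},
\end{equation*}
which, after term-by-term summation and taking supremum over $[t_0,T]$, gives exactly $\|\mathcal{T}x_1-\mathcal{T}x_2\|_\infty\leq C_1(T,A)\|x_1-x_2\|_\infty$. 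Hypothesis (\ref{existence condition1}) then makes $\mathcal{T}$ a contraction, and Banach's theorem delivers the unique fixed point, which lies in $C^{\theta,\mu,\gamma}[t_0,T]$ because $~^{ABC}_{t_0}D^{\theta,\mu,\gamma}x(t)=f(t,x(t))$ is continuous by composition.

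For the case $\mu=1$, the $i=0$ term in the series (\ref{gammal}) reduces to the identity operator scaled by $(1-\theta)/B(\theta)$, so the fixed point equation becomes
\begin{equation*}
x(t) = x_0 + \frac{1-\theta}{B(\theta)}f(t,x(t)) + \sum_{i=1}^\infty \binom{\gamma}{i}\frac{\theta^i}{B(\theta)(1-\theta)^{i-1}}(~_{t_0}I^{i\theta}f(\cdot,x(\cdot)))(t).
\end{equation*}
Consistency at $t=t_0$ forces $f(t_0,x_0)=0$, which is the extra hypothesis imposed in the statement. The identical Lipschitz estimate together with the bound on the remaining series produces the contraction constant $C_2(T,A)$ in (\ref{existence condition11}), and Banach's theorem applies again.

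The main technical obstacle is justifying that the series defining $(~^{AB}_{t_0}I^{\theta,\mu,\gamma}f(\cdot,x(\cdot)))(t)$ converges uniformly on $[t_0,T]$, so that term-wise estimation and the interchange of supremum and summation are legal; for a fixed $T$ and $A$, the very hypothesis $C_1(T,A)<1$ (respectively $C_2(T,A)<1$) supplies the needed Weierstrass majorant, since applying it with $|f|$ replaced by the constant bound on $B\subset G$ dominates the series geometrically. Once this is in place, the existence and uniqueness follow immediately from the contraction mapping principle, and continuity of the fixed point is inherited from continuity of $f$ and the uniformly convergent series.
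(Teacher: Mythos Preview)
Your proposal is correct and follows essentially the same route as the paper: define the operator $\Upsilon x = x_0 + {}^{AB}_{t_0}I^{\theta,\mu,\gamma} f(\cdot,x(\cdot))$ on $C[t_0,T]$, feed the Lipschitz bound through the series (\ref{gammal}) term by term to obtain the contraction constants $C_1(T,A)$ and $C_2(T,A)$, and conclude by Banach's fixed point theorem, with the extra condition $f(t_0,x_0)=0$ when $\mu=1$ ensuring $x(t_0)=x_0$. You actually supply more detail than the paper on two points it leaves implicit---the uniform convergence of the AB-integral series (via a Weierstrass majorant) and the explicit $(~_{t_0}I^{i\theta+1-\mu}1)(t)$ computation---while the paper is slightly more explicit than you are about the converse direction of the equivalence between (\ref{systemm}) and the integral equation, invoking Lemma~\ref{lem2} and (\ref{weknow}) in addition to Lemma~\ref{lem1}.
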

\begin{proof}
 Define the operator $\Upsilon:C[t_0,T]\rightarrow C[t_0,T]$ by
\begin{equation}\label{operator}
 ( \Upsilon x)(t)=x_0 +~_{t_0}^{AB}I^{\theta,\mu,\gamma}f(t,x(t)),
\end{equation}
where the space $C[t_0,T]$  having  the  norm $\|.\|_\infty$.
For any $x_1,x_2 \in B$, by the help of the  Lipschitzian condition (\ref{Lipsch}) and by straight forward calculations, for any $t \in [t_0,T]$,  we have
\begin{equation}\label{calc}
  |(\Psi y_1(t)-\Psi y_2(t))|\leq  C_1(T,A)\|x_1-x_2\|_\infty,~~\texttt{if}~\mu\neq 1,
\end{equation}
and
\begin{equation}\label{calc1}
  |(\Psi y_1(t)-\Psi y_2(t))|\leq  C_2(T,A)\|x_1-x_2\|_\infty,~~\texttt{if}~\mu= 1.
\end{equation}

Then, taking the supremum over all $t\in[t_0,T]$ and using the assumptions (\ref{existence condition1}),(\ref{existence condition11}) we conclude that $\Upsilon$ a contraction on the Banach space $C[t_0,T]$.
 Therefore, there exists  a unique fixed point $x \in C[t_0,T]$ due to  Banach fixed point theorem. In addition,
\begin{equation}\label{more1}
  \lim_{m\rightarrow\infty}\|\Upsilon^mx_0-x\|_\infty=0.
\end{equation}
Because of  the definition of $\Upsilon$, $x$ possesses the form
\begin{equation}\label{srep}
  x(t)=x_0+~_{t_0}^{AB}I^{\theta,\mu.\gamma}f(t,x(t)).
\end{equation}
By  Lemma \ref{lem1}, Lemma \ref{lem2},  the identity  $$~^{ABR}_{t_0}D^{\theta,\mu,\gamma}~^{AB}_{t_0}I^{\theta,\mu,\gamma}\eta(t)=\eta(t),$$ and taking into account that $f(a,y(a))=0$ in the case $\mu=1$ , it can be shown that $y(t)$ satisfies the system (\ref{systemm}) if and only if it satisfies (\ref{operator}). Finally,  we have the estimate
  \begin{eqnarray}
           \nonumber
            \|~_{t_0}^{ABC}D^{\theta,\mu,\gamma}T^m x_0-~_{t_0}^{ABC}D^{\theta,\mu,\gamma}x\|_\infty &\leq& A\|\Upsilon^mx_0-x\|_\infty  \\  \nonumber
          \end{eqnarray}
          From (\ref{more1}), we conclude that $\lim_{m\rightarrow \infty} \|~_{t_0}^{ABC}D^{\theta,\mu,\gamma}T^m x_0-~_{t_0}^{ABC}D^{\theta,\mu,\gamma}x\|_\infty=0$. That is
          $~_{t_0}^{ABC}D^{\theta,\mu,\gamma}x\in C[t_0,T]$ and hence $x \in C^{\theta,\mu,\gamma}[t_0,T]$.

          The condition  $f(t_0,x(t_0))=0$ in case $\mu=1$ is needed in order to guarantee that solution given by (\ref{srep}) will satisfy $x(t_0)=x_0$. However, one may note that when $\mu\neq 1$ then $x(t_0)=x_0$ without any restrictions.
\end{proof}

\begin{remark}
The successive approximation generated in the proof of Theorem \ref{existence uniqueness} above were used in \cite{ThChaosAIP} to produce explicit solutions for the linear system (\ref{clinear})  above, by benefiting from the semigroup properties proved for the generalized AB-integral operators there. The Laplace transforms also were used there in \cite{ThChaosAIP} and completed in \cite{T1} for both the continuous and discrete cases to obtain an explicit solution as stated above in Lemma \ref{tool1}.
\end{remark}
\begin{theorem}\label{Cor1}
(\ref{mm1}) owns a unique solution in the space $C^{\theta,\mu,\gamma}[t_0,T]$ , provided that
\begin{equation}\label{existence condition}
C_1(T,A)<1,\texttt{if}~\mu\neq 1 \texttt{ and }C_2(T,A)<1,\texttt{if}~\mu= 1,
\end{equation}
where $A= r(1+2L)$. The case $\mu=1$ requires that either $x(t_0)=0$ or $x(t_0)=1$.
\end{theorem}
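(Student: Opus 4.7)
The plan is to view Theorem \ref{Cor1} as a direct specialization of Theorem \ref{existence uniqueness} with the specific nonlinearity $f(t,x)=rx(1-x)$. So the task reduces to (i) verifying the Lipschitz condition \eqref{Lipsch} for this $f$ on the ball $B=\{x:|x|\le L\}$ with constant $A=r(1+2L)$, and (ii) translating the side condition $f(t_0,x(t_0))=0$ required when $\mu=1$ into a condition on the initial datum $x_0$.

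First I would compute the Lipschitz constant. For $x_1,x_2\in B$, write
\begin{equation*}
f(t,x_1)-f(t,x_2)=r(x_1-x_2)-r(x_1^2-x_2^2)=r(x_1-x_2)\bigl[1-(x_1+x_2)\bigr].
\end{equation*}
Since $|x_1+x_2|\le 2L$, the triangle inequality gives $|1-(x_1+x_2)|\le 1+2L$, so
\begin{equation*}
|f(t,x_1)-f(t,x_2)|\le r(1+2L)\,|x_1-x_2|,
\end{equation*}
which is exactly \eqref{Lipsch} with $A=r(1+2L)$. Note that $f$ is autonomous and continuous in both variables, so all the regularity hypotheses of Theorem \ref{existence uniqueness} are met, and the hypotheses $C_1(T,A)<1$ (if $\mu\ne 1$) or $C_2(T,A)<1$ (if $\mu=1$) are imposed in the statement.

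Next, I would invoke Theorem \ref{existence uniqueness} to obtain a unique fixed point $x\in C^{\theta,\mu,\gamma}[t_0,T]$ of the operator $\Upsilon x(t)=x_0+{}_{t_0}^{AB}I^{\theta,\mu,\gamma}f(t,x(t))$, and to conclude that this fixed point solves \eqref{mm1}. For the case $\mu=1$, the extra requirement of Theorem \ref{existence uniqueness} is $f(t_0,x(t_0))=0$; for our $f$ this reads $r\,x_0(1-x_0)=0$, which (since $r>0$) forces $x(t_0)\in\{0,1\}$, matching the stated restriction.

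The only subtle point worth emphasizing is that the Lipschitz constant $A=r(1+2L)$ depends on the a priori bound $L$ used to define $B$, so $L$ must be chosen compatibly with the bounds $C_1(T,A)<1$ or $C_2(T,A)<1$; in particular $L$ should be taken at least $|x_0|$, and then one must ensure the solution does not leave $B$ on $[t_0,T]$. In practice this is arranged by first fixing $L$ so that both $|x_0|\le L$ and the relevant contraction constant is less than one, and then noting that the fixed point of $\Upsilon$ automatically lies in the ball since each iterate $\Upsilon^m x_0$ does. This bookkeeping about $L$ is the only mildly delicate step; beyond it the proof is essentially a one-line corollary of Theorem \ref{existence uniqueness}.
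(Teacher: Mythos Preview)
Your proof is correct and follows essentially the same route as the paper: apply Theorem~\ref{existence uniqueness} with $f(t,x)=rx(1-x)$, verify the Lipschitz bound via the factorization $f(t,x_1)-f(t,x_2)=r(x_1-x_2)\bigl[1-(x_1+x_2)\bigr]$ to get $A=r(1+2L)$, and read off the $\mu=1$ restriction from $f(t_0,x_0)=0$. The paper's proof is in fact terser---it omits your explicit treatment of the $\mu=1$ initial condition and the bookkeeping about staying in the ball $B$---so your added care is welcome rather than a divergence.
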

\begin{proof}
The proof follows from considering  Theorem \ref{existence uniqueness} with $f(t,x(t))=r x(t)(1-x(t))$ and by noting that $f$ satisfies the Lipschitz constant $A= r(1+2L)$ due to the fact that
 $$|f(t,x_1)-f(t,x_2)|=r|(x_1-x_2)(1+x_1+x_2)|\leq r(1+2L)|y_1-y_2|.$$
\end{proof}
\begin{theorem}\label{Cor2}
 (\ref{mm2}) acquires a unique solution in the space$C^{\theta,\mu,\gamma}[t_0,T]$ , provided that
\begin{equation}\label{existence condition}
C_1(T,A)<1,\texttt{if}~\mu\neq 1 \texttt{ and }C_2(T,A)<1,\texttt{if}~\mu= 1,
\end{equation}
where $A= r\left(-m+(1+\frac{m}{k})2L+\frac{L^2}{k} \right)$. The case $\mu=1$ requires that either $x(t_0)=0$ or $x(t_0)=m$ or $x(t_0)=k$.
\end{theorem}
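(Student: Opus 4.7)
The plan is to mirror the argument of Theorem \ref{Cor1}, applying the general existence-uniqueness result (Theorem \ref{existence uniqueness}) to the specific cubic right-hand side
\[
f(t,x)=rx\Bigl(1-\frac{x}{k}\Bigr)(x-m),
\]
and then to check the two hypotheses of that theorem: the Lipschitz bound and, in the exceptional case $\mu=1$, the compatibility condition $f(t_0,x(t_0))=0$.

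For the Lipschitz estimate, I would first factor $f(t,x_1)-f(t,x_2)$ so that a common factor of $(x_1-x_2)$ is extracted. Expanding
\[
f(t,x)=r\Bigl[-\tfrac{1}{k}x^3+\bigl(1+\tfrac{m}{k}\bigr)x^2-m x\Bigr],
\]
the difference factors cleanly as
\[
f(t,x_1)-f(t,x_2)=r(x_1-x_2)\Bigl[-\tfrac{1}{k}(x_1^2+x_1x_2+x_2^2)+\bigl(1+\tfrac{m}{k}\bigr)(x_1+x_2)-m\Bigr].
\]
Then, using $|x_i|\le L$ on the ball $B$ from Definition \ref{solution}, each monomial term in the bracket is bounded (with the appropriate sign convention matching the statement) by the constant $A=r\bigl(-m+(1+\tfrac{m}{k})2L+\tfrac{L^2}{k}\bigr)$ advertised in the theorem; this yields $|f(t,x_1)-f(t,x_2)|\le A|x_1-x_2|$, i.e.\ condition (\ref{Lipsch}).

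With the Lipschitz constant in hand, assumption (\ref{existence condition1}) or (\ref{existence condition11}) of Theorem \ref{existence uniqueness} is exactly the standing hypothesis $C_1(T,A)<1$ (or $C_2(T,A)<1$ when $\mu=1$) of the present theorem, so invoking Theorem \ref{existence uniqueness} delivers a unique solution $x\in C^{\theta,\mu,\gamma}[t_0,T]$. In the borderline case $\mu=1$, the theorem also requires $f(t_0,x(t_0))=0$; since $f(t,x)=rx(1-x/k)(x-m)$ vanishes precisely at the three roots $x=0,\ m,\ k$, this forces the compatibility restriction $x(t_0)\in\{0,m,k\}$ stated in the conclusion.

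The genuinely delicate step is the Lipschitz bounding: unlike the quadratic case in Theorem \ref{Cor1}, the cubic introduces a symmetric quadratic form $x_1^2+x_1x_2+x_2^2$, and one must be careful with signs (the leading cubic coefficient is negative) to obtain the precise constant $A$ stated. Everything else is a direct appeal to Theorem \ref{existence uniqueness}, so once the bound is established the conclusion follows immediately from the Banach fixed-point argument already carried out there.
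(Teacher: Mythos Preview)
Your approach is essentially identical to the paper's: apply Theorem \ref{existence uniqueness} to the cubic $f(t,x)=rx(1-x/k)(x-m)$, verify the Lipschitz bound, and read off the $\mu=1$ compatibility condition from the roots of $f$. The paper's own proof is in fact shorter than yours --- it simply invokes Theorem \ref{existence uniqueness} and then refers to the proof of Theorem 4 in \cite{TQF1} for the Lipschitz estimate, without writing out the factorisation you give.

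One caution: your factorisation
\[
f(t,x_1)-f(t,x_2)=r(x_1-x_2)\Bigl[-\tfrac{1}{k}(x_1^2+x_1x_2+x_2^2)+\bigl(1+\tfrac{m}{k}\bigr)(x_1+x_2)-m\Bigr]
\]
is correct, but the assertion that a straight bound on each monomial yields \emph{exactly} $A=r\bigl(-m+(1+\tfrac{m}{k})2L+\tfrac{L^2}{k}\bigr)$ is not justified as written. A naive triangle inequality on the bracket gives $|x_1^2+x_1x_2+x_2^2|\le 3L^2$ and contributes $+m$ (not $-m$), producing $r\bigl(m+(1+\tfrac m k)2L+\tfrac{3L^2}{k}\bigr)$ rather than the stated $A$. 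The constant in the statement appears to be inherited from \cite{TQF1}, and the paper does not re-derive it; if you want to claim the precise value of $A$ you would need either the argument from that reference or an additional sign/positivity restriction on the $x_i$. This does not affect the structure of your proof --- any valid Lipschitz constant feeds into (\ref{existence condition1})/(\ref{existence condition11}) --- but your parenthetical ``with the appropriate sign convention matching the statement'' is hiding a genuine computation.
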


\begin{proof}
The proof can be carried out by  applying  Theorem \ref{existence uniqueness} with  $f(t,x(t))=r x(t) (1-\frac{x(t)}{k})(x(t)-m)$ and then tracing the same steps as in the proof of Theorem 4 in \cite{TQF1}.
\end{proof}

\section{Stability analysis for the $ABC-$logistic models}
In this section  we dispute  the stability of the quadratic and cubic  logistic models by the help of  perturbation of  the equilibrium points.
Assume $\theta \in (0,1], \mu>0 ,~~\gamma >0$ and consider the $ABC-$fractional initial value problem

\begin{equation}\label{IVP}
  ~_{t_0}^{ABC}D^{\theta,\mu,\gamma}x(t)= f(x),~~x(t_0)=x_0,~~t>t_0.
\end{equation}
Let $y_0$ be an equilibrium point of the system (\ref{IVP}), that is  $f(y)=0$, and assume that $x(t)=y_0+\alpha(t)$. Since the ABC fractional derivative of the constant function is zero,  following the same as in Section 4 in \cite{TQF1}, we deduce that the system (\ref{IVP}) has the following corresponding perturbed system:

\begin{equation}\label{ps}
  ~_{t_0}^{ABC}D^{\theta,\mu,\gamma}\alpha(t)=f^\prime(y_0)\alpha(t),~~t>t_0,~~\alpha(t_0)=x_0-y_0.
\end{equation}
In what follow, we use the perturbed system (\ref{ps}) to study the stability of the logistic models by making use of Lemma \ref{tool1} with $\rho=f^\prime(y_0)$ and $g(t)=0$, where $f$ is the right hand side of the investigated logistic model.
\subsection{Analysis of the quadratic logistic model}
 We see that the quadratic logistic model  has the equilibrium points $y=0,1$. The corresponding  right hand side function of model (1) is $f(x)=rx(1-x)$ and hence
 $f^\prime(x)=r(1-2x)$ and $f^\prime(0)=r,~~f^\prime(1)=-r$.

 The perturbed system associated to the equilibrium point $z=0$ is the fractional linear system
 \begin{equation}\label{equi1}
    ~_{t_0}^{ABC}D^{\theta,\mu,\gamma}\alpha(t)=r \alpha(t),~~\alpha(t_0)=x_0.
 \end{equation}
Applying Lemma \ref{tool1} with $g(t)=0$  and $\rho=r$, the solution of system (\ref{equi1}) is given by
$$\alpha(t)=x_0+ x_0\sum_{j=1}^\infty r^j (\frac{1-\theta}{B(\theta)})^j E_{\theta,j(1-\mu)+1}^{-\gamma j}(\lambda, t-t_0),$$
and hence the equilibrium point $y=0$ is unstable.\\

In addition, the perturbed system associated to the equilibrium point $z=1$ is the fractional linear system
 \begin{equation}\label{equi2}
   ~_{t_0}^{ABC}D^{\theta,\mu,\gamma}\alpha(t)=-r\alpha(t),~~\alpha(t_0)=x_0-1.
 \end{equation}
The solution of system (\ref{equi2}) is given by
$$\alpha(t)=(x_0-1)+ (x_0-1)\sum_{j=1}^\infty (-r)^j (\frac{1-\theta}{B(\theta)})^j E_{\theta,j(1-\mu)+1}^{-\gamma j}(\lambda, t-t_0),$$
and hence the equilibrium point $y=1$ is asymptotically stable.
\subsection{Analysis of the cubic logistic model }
We see that the cubic logistic model  has the equilibrium points $y_1=0,y_2=m$ and $y_3=k$. The corresponding right hand side function of  (\ref{mm2}) is $f(x)=r x(t) (1-\frac{x(t)}{k})(x(t)-m)$ and hence
 $f^\prime(x)=r (1-\frac{x(t)}{k})(x(t)-m)+r x(t)(1-\frac{x(t)}{k})-\frac{r}{k}x(t)(x(t)-m)$ and $f^\prime(0)=-rm,~~f^\prime(m)=rm (1-\frac{m}{k})$, and $f^\prime(k)=-r (k-m)$.

 The perturbed system associated to the equilibrium point $y=0$ is the fractional linear system
 \begin{equation}\label{2equi1}
   ~_{t_0}^{ABC}D^{\theta,\mu,\gamma}\alpha(t)=-rm \alpha(t),~~\alpha(t_0)=x_0,~~t>t_0.
 \end{equation}
Applying Lemma \ref{tool1} with $g(t)=0$ and $\rho=rm$, the solution of system (\ref{2equi1}) is given by
$$\alpha(t)=x_0+ x_0\sum_{j=1}^\infty (-rm)^j (\frac{1-\theta}{B(\theta)})^j E_{\theta,j(1-\mu)+1}^{-\gamma j}(\lambda, t-t_0).$$
Since $r, m>0$ the equilibrium point $y_1=0$ is asymptotically stable.

Also the perturbed system associated to the equilibrium point $y_2=m$ is the fractional linear system
 \begin{equation}\label{2equi2}
   ~_{t_0}^{ABC}D^{\theta,\mu,\gamma}\alpha(t)=rm (1-\frac{m}{k})\alpha(t),~~\alpha(t_0)=x_0-m.
 \end{equation}
The solution of system (\ref{2equi2}) is given by
$$\alpha(t)=   (x_0-m)+ (x_0-m)\sum_{j=1}^\infty (rm (1-\frac{m}{k}))^j (\frac{1-\theta}{B(\theta)})^j E_{\theta,j(1-\mu)+1}^{-\gamma j}(\lambda, t-t_0).$$
Since $r,m,k, \rho>0$ and $m<k$, then the equilibrium point $y_2=m$ is unstable.

Finally, the perturbed system associated to the equilibrium point $y_3=k$ is the fractional linear system
 \begin{equation}\label{2equi3}
  ~_{t_0}^{ABC}D^{\theta,\mu,\gamma}\alpha(t)= -r(k-m) \alpha(t),~~\alpha(t_0)=x_0-k.
 \end{equation}
The solution of system (\ref{2equi3}) is given by
$$\alpha(t)=(x_0-k)+ (x_0-k)\sum_{j=1}^\infty (-r(k-m))^j (\frac{1-\theta}{B(\theta)})^j E_{\theta,j(1-\mu)+1}^{-\gamma j}(\lambda, t-t_0).$$
Since $r,m,k>0$ and $m<k$, then the equilibrium point $z_3=k$ is asymptotically stable.

\begin{remark}
Upon Theorem \ref{Cor1} and Theorem \ref{Cor2} the quadratic logistic model and the cubic logistic model lead to the trivial cases for the case $\mu=1$. Hence, the case $\mu\neq1$ turns to be of more interest. In fact, when $\mu=1$ we have

\begin{itemize}
  \item Under the assumptions $x(t_0)=0$, $x(t_0)=1$ the equilibrium  points $y_1=0$ and $y_2=1$ of the quadratic logistic model are stable, respectively. Their corresponding perturbed linear system in this case will have the trivial solution.
  \item Under the assumptions $x(t_0)=0$, $x(t_0)=m$  and $x(t_0)=k$ the equilibrium  points $y_1=0$ and $y=m$ and  $y_3=k$ of the cubic logistic model are stable, respectively. Their corresponding perturbed linear system in this case will have the trivial solution.
\end{itemize}
\end{remark}

\section{Numerical Discussion}
In this section, we give a description of the numerical scheme to solve the initial value problem
\begin{equation}\label{systemmm}
  ~_{t_0}^{ABC}D^{\theta,\mu,\gamma}x(t)=f(t,x(t)),~~x(t_0)=x_0,~t \in (t_0,T],
\end{equation}
where $\theta \in (0,1)$.

\subsection{ Numerical Scheme}
The scheme is a predictor-corrector method based on Lagrange interpolation. We derive two  schemes, one explicit (the predictor scheme) and the other implicit (the corrector scheme).

The solution of  (\ref{systemmm}) is written as
\begin{equation}
x(t)=x(t_0)+\sum_{k=0}^{\infty} A_k^{\theta,\mu,\gamma} (I_k(x))(t),
\end{equation}
where
\begin{eqnarray*}
 && (I_k(x))(t)=\int_{t_0}^t (t-s)^{\theta k-\mu}f(s,x(s))ds,\\
 && A^{\theta,\mu,\gamma}_k={\gamma\choose{k}} \frac{\theta^k}{B(\theta)(1-\theta)^{k-1}}\frac{1}{\Gamma(\theta k-\mu+1)}.
\end{eqnarray*}
The interval $(0,T]$ is discritised uniformly with grid points $t_m=t_0+m h,\ m\ge0$,  $h$ a step size. Then at $t_m$, we have
\be\label{xtm}
x(t_m)=x(t_0)+\sum_{k=0}^{\infty} A_k^{\theta,\mu,\gamma} (I_k(x))(t_m).
\ee
Then integral for $ (I_k(x))(t_m)$ is written as
$$
 (I_k(x))(t_m)=\sum_{j=0}^{m-1} I^m_{k,j}(x),\qquad  I^m_{k,j}(x)=\int_{t_j}^{t_{j+1}} (t_m-s)^{\theta k-\mu}f(s,x(s))ds.
$$
Here, depending on how we approximate  $I^m_{k,j}(x)$, we obtain either an implicit scheme or an explicit one. For an explicit scheme, we approximate $f(s,x(s))$ by a Lagrange polynomial using the nodes $t_j$ and $t_{j-1}$:
$$
f(s,x(s))\approx \frac{1}{h}[(s-t_{j-1}) f_j -(s-t_j)f_{j-1}],\qquad f_j=f(t_j,x(t_j)).
$$
This gives the following approximation for $I^m_{k,j}(x),\ j\ge 1$:
$$
I^m_{k,j}\approx  h^{\theta k-\mu+1}(f_j w^{m,1}_{k,j}-f_{j-1} w^{m,0}_{k,j}),
$$
where
$$
w^{m,i}_{k,j}=(m-j+i)\frac{(m-j)^{\theta k -\mu+1}-(m-j-1)^{\theta k -\mu+1}}
{\theta k-\mu+1}+\frac{(m-j-1)^{\theta k -\mu+2}-(m-j)^{\theta k -\mu+2}}{\theta k-\mu+2}.
$$
For $j=0$, we approximate $I^m_{k,0}$ by approximating $f(s,x(s))$ in the integrand by $f_0=f(t_0,x(t_0))$. This gives the approximation
$$
I^m_{k,0}\approx h^{\theta k-\mu+1}\xi^m_{k} f_0,\qquad\xi^m_{k}= \frac{m^{\theta k-\mu+1}-(m-1)^{\theta k-\mu+1}}{\theta k-\mu+1}.
$$
Therefore, $(I_k(x))(t_m), \ m\ge1$, is then approximated by
\bea
(I_k(x))(t_m)&=&  I^m_{k,0}(x)+ \sum_{j=1}^{m-1} I^m_{k,j}(x)\nonumber\\
&\approx& h^{\theta k-\mu+1}\left[\xi^m_{k} f_0+\sum_{j=1}^{m-1}
(f_j w^{m,1}_{k,j}-f_{j-1} w^{m,0}_{k,j})\right]\nonumber\\
&=& h^{\theta k-\mu+1} \sum_{j=0}^{m-1}
r^m_{k,j} f_j,\label{Iktm}
\eea
where
$$
r^m_{k,j}=\left\{\ba{ll} \xi^m_{k}-w^{m,0}_{k,1},\ &\mbox{if } j=0,\\
w^{m,1}_{k,j}-w^{m,0}_{k,j+1},\ & \mbox{if } 1\le j\le m-2,\\
w^{m,1}_{k,m-1},\ &  \mbox{if }  j=m-1.\ea\right.
$$
Substituting (\ref{Iktm}) into (\ref{xtm}), letting $x_m\approx x(t_m)$, we obtain the following  explicit scheme:
$$
x_m=x_0+\sum_{k=0}^{\infty} A_k^{\theta,\mu,\gamma} \left[h^{\theta k-\mu+1} \sum_{j=0}^{m-1}
r^m_{k,j} f_j\right]
$$
which can be written as
\be\label{explicitscheme}
x_m=x_0+\sum_{j=0}^{m-1} c^m_j f_j,\qquad c^m_j=\sum_{k=0}^{\infty} \left(h^{\theta k-\mu+1}A_k^{\theta,\mu,\gamma}r^m_{k,j}\right)
\ee
Note that if $\gamma$ is an integer, say $\gamma=K$, then $A_k^{\theta,\mu,\gamma}=0$ for $k>K$, and  $c^m_j$ becomes
$ c^m_j=\sum\limits_{k=0}^{K} \left(h^{\theta k-\mu+1}A_k^{\theta,\mu,\gamma}r^m_{k,j}\right)$.

For an implicit scheme, we approximate $f(s,x(s))$ in the integral defining $I^m_{k,j}(x)$ using the nodes $t_j$ and $t_{j+1}$:
$$
f(s,x(s))\approx \frac{1}{h}[(s-t_{j}) f_{j+1} -(s-t_{j+1})f_{j}],
$$
which gives the following approximation for $I^m_{k,j}(x),\ j\ge 1$:
$$
I^m_{k,j}\approx  h^{\theta k-\mu+1}(f_{j+1} w^{m,0}_{k,j}-f_{j} w^{m,-1}_{k,j}),
$$
Therefore, $(I_k(x))(t_m), \ m\ge1$, is then approximated by
\bea
(I_k(x))(t_m)&=&  \sum_{j=0}^{m-1} I^m_{k,j}(x)
\approx h^{\theta k-\mu+1}\sum_{j=0}^{m-1}
(f_{j+1} w^{m,0}_{k,j}-f_{j} w^{m,-1}_{k,j})\nonumber\\
&=& h^{\theta k-\mu+1} \sum_{j=0}^{m}
\tilde{r}^m_{k,j} f_j,\label{Iktmimplicit}
\eea
where
$$
\tilde{r}^m_{k,j}=\left\{\ba{ll} -w^{m,-1}_{k,0},\ &\mbox{if } j=0,\\
w^{m,0}_{k,j-1}-w^{m,-1}_{k,j},\ & \mbox{if } 1\le j\le m-1,\\
w^{m,0}_{k,m-1},\ &  \mbox{if }  j=m.\ea\right.
$$
Substituting (\ref{Iktmimplicit}) into (\ref{xtm}), leads to the following  implicit scheme:
\be\label{implicitscheme}
x_m=x_0+\sum_{j=0}^{m} \tilde{c}^m_j f_j,\qquad \tilde{c}^m_j=\sum_{k=0}^{\infty} \left(h^{\theta k-\mu+1}A_k^{\theta,\mu,\gamma}\tilde{r}^m_{k,j}\right).
\ee

%%%%%%%%%%%%%%%
\subsection{ Numerical Simulations}
For sake of simplicity, we will focus on the numerical discussion of Equation (1) given by:
\begin{equation}\label{NS-mm1}
   ( ~_{t_0}^{ABC}D^{\alpha,\mu,\gamma}x)(t)=r x(t) (1- x(t)/K),~~t>t_0,~x(t_0)=x_0,
\end{equation}
with $r=0.5$ and $K=2$. The targets of this example are to discuss the effects of the parameters $x_0$, $\theta$, $\mu$, and $\gamma$ on the solution trajectories.  \\
\\
It is clearly observed that  equation (\ref{NS-mm1}) has two equilibria given by $x_1=0$ and $x_2=2$. Figure \ref{fig-m1-n-1} shows the solution trajectories as the initial point, at $x_0$, changes in the set $\{0.5, 1, 1.5, 2.5, 3\}$ when $\gamma = 1$, $\alpha = 0.5$, and $\mu = 0.5$.  One can clearly see that  the solution trajectories converge to $x_2=2$ asymptotically for any $x_0$. Thus, we conclude that $x_2=2$ is  asymptotically stable equilibrium solution whereas $x_1=0$ is unstable equilibrium solution. The rate of convergence of  solution trajectories to the equilibrium solutions is strongly dependent on the initial points; i.e. it  is higher as the initial point closer to the value of the steady state, $x_2=2$. It is worth mentioning that the behaviour of solution trajectories in this case is similar to the integer case.
\begin{figure}[h]
\begin{center}
\includegraphics[height=7cm]{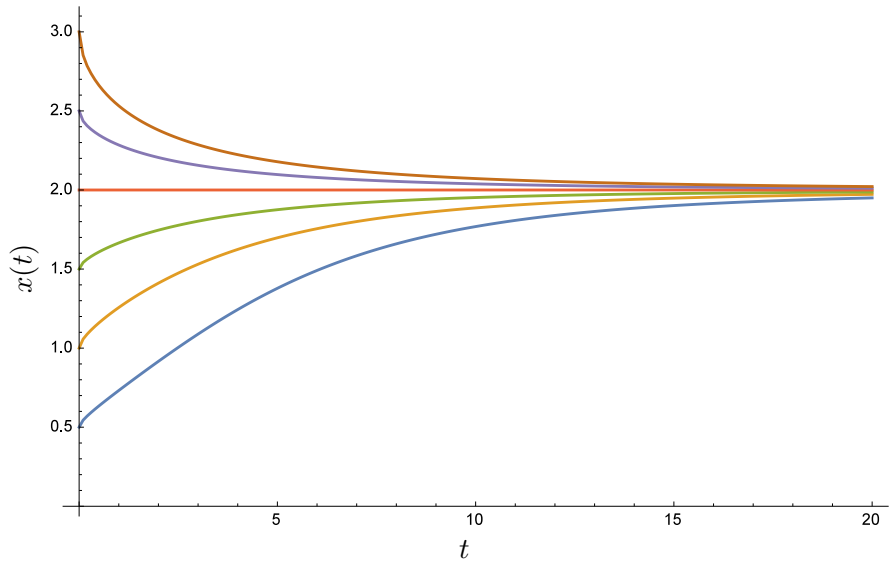}
\end{center}
\begin{picture}(5,5)(5,5)
                \put(210,20){$t$}
                \put(40,120){\rotatebox{90}{$x(t)$}}
    \end{picture}
\caption{Graphs of the the solution trajectories for Example 1 at $\gamma = 1$, $\alpha = 0.5$, and $\mu = 0.5$ for different values of initial condition.} \label{fig-m1-n-1}
\end{figure}
\\
The effect of changing $\gamma$ from $1.0$ to $3.0$ at fixed values of $\alpha=1/2$ and $\mu=1/2$ on the behaviour of solution trajectories is presented in Figure \ref{fig-m1-n-2}. We consider two values for $x_0$: $1.0$ and $3.0$. Obviously, the required time for solution trajectories  to reach the equilibrium solution $x_2=2$ decreases as $\gamma$ increases. An interesting phenomenon is the oscillatory behavior at early stages of the time appeared as $\gamma$ increases.  \\
\begin{figure}[h]
\begin{center}
\includegraphics[height=7cm]{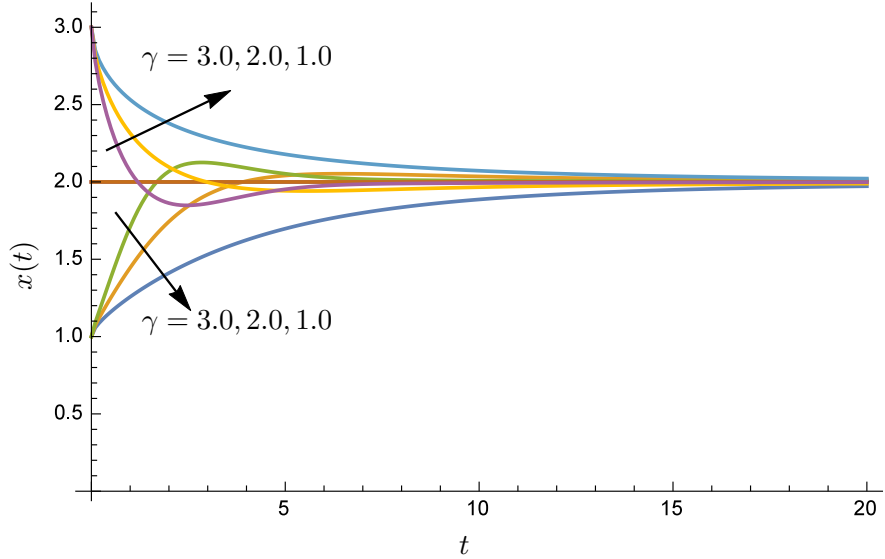}
\end{center}
\begin{picture}(5,5)(5,5)
                \put(210,20){$t$}
                \put(40,120){\rotatebox{90}{$x(t)$}}
                \put(90,105){$\gamma=3.0, 2.0, 1.0$}
                \put(90,205){$\gamma=3.0, 2.0, 1.0$}
    \end{picture}
\caption{Graphs of the  solution trajectories for Example 1 at $x_0=1.0$, $x_0=3.0$, $\alpha=1/2$ and $\mu=1/2$ for different values of $\gamma=1.0, 2.0,3.0$.} \label{fig-m1-n-2}
\end{figure}
\\
Figure \ref{fig-m1-n-3} shows the solution trajectories at $\alpha=1/2$, $\gamma=1.0$ and $x_0=1.0$ and $3.0$, while $\mu$ is changing in the set $\{0.4, 0.6, 0.8\}$. we observe that the required time for solution trajectories  to reach the equilibrium solution decreases as $\mu$ decreases.
\begin{figure}[h]
\begin{center}
\includegraphics[height=7cm]{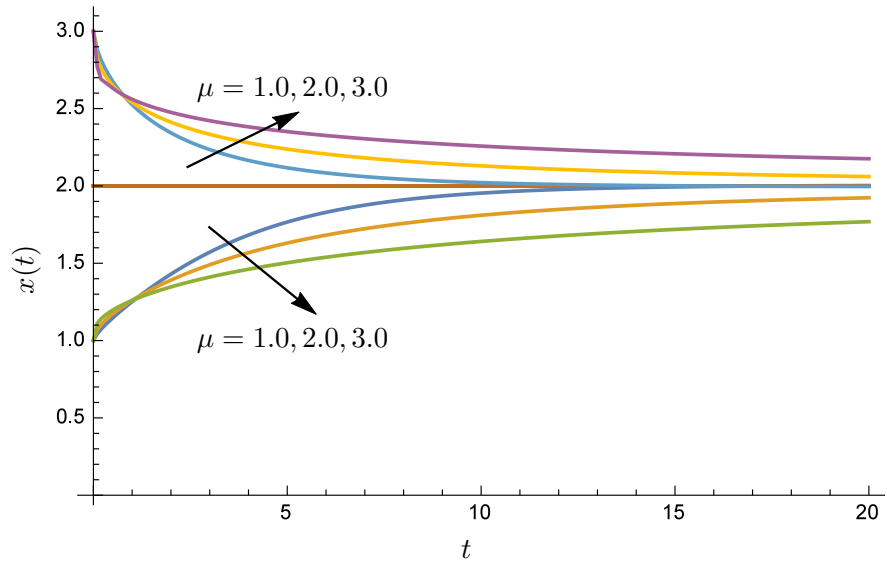}
\end{center}
\begin{picture}(5,5)(5,5)
                \put(210,20){$t$}
                \put(40,120){\rotatebox{90}{$x(t)$}}
                \put(110,100){$\mu=1.0, 2.0, 3.0$}
                \put(110,195){$\mu=1.0, 2.0, 3.0$}
    \end{picture}
\caption{Graphs of the the solution trajectories for Example 1 at $x_0=1.0$, $x_0=3.0$, $\alpha=1/2$ and $\gamma=1.0$ for different values of $\mu=0.4, 0.6, 0.8$.} \label{fig-m1-n-3}
\end{figure}
\\
The effect of changing $\alpha$ in the set $\{0.7, 0.9, 0.99 \}$ at fixed values of $\gamma=1$ and $\mu=1/2$ on the behaviour of solution trajectories is presented in Figure \ref{fig-m1-n-4}. We consider two values for $x_0$: $1.0$ and $3.0$. It is noted that the effect of increasing $\alpha$ will slow the required time for solution trajectories  to reach the equilibrium solution. Once again, the oscillatory behaviour as $\alpha$ increases is captured. \\
\begin{figure}[h]
\begin{center}
\includegraphics[height=7cm]{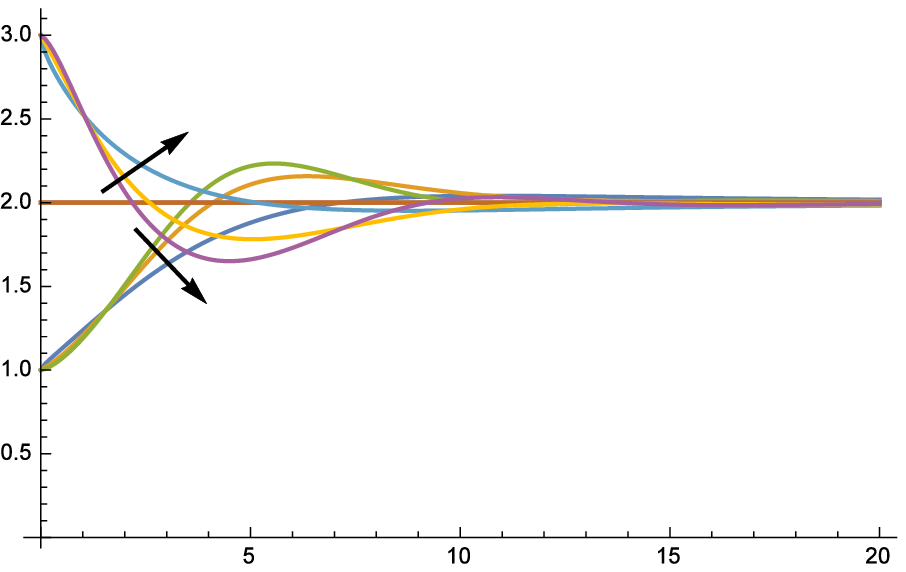}
\end{center}
\begin{picture}(5,5)(5,5)
                \put(210,20){$t$}
                \put(40,120){\rotatebox{90}{$x(t)$}}
                \put(90,105){$\alpha=0.99, 0.9, 0.7$}
                \put(90,205){$\alpha=0.99, 0.9, 0.7$}
    \end{picture}
\caption{Graphs of the the solution trajectories for Example 1 at $x_0=1.0$, $x_0=3.0$, $\mu=1/2$ and $\gamma=1.0$ for different values of $\alpha=0.7, 0.9, 0.999$.} \label{fig-m1-n-4}
\end{figure}
\\

\section{Conclusion}
In this article, we analysed the logistic equations in the setting of ABC-fractional  derivatives with generalized Mittag-Leffler kernels. Such kind of a  fractional derivative contains two parameters $\gamma$ and $\mu$ along with the order $\alpha$. We discussed the Existence and uniqueness condition in addition to their stability. Further, numerical examples were considered to demonstrate these results. It is clearly seen that for fixed values of the parameters  $\alpha,\mu$ and $\gamma$, the convergence of the solution to the equilibrium point is dependent on the initial value. For fixed initial value and fixed $\alpha$ and $\mu$, the convergence of the solution of to the equilibrium point is faster when greater values of $\gamma$ are considered while for fixed values of $\alpha,\gamma$,  the convergence of the solution of to the equilibrium point is faster for smaller values of $\mu$. Morever, it can be obviously observed  that for greater values of $\alpha$, the required time for solution trajectories  to reach the equilibrium solution decreases.

\end{document}